\def\ra{\rightarrow}
\newtheorem{theorem}{THEOREM}[section]
\newtheorem{proposition}[theorem]{Proposition}
\newtheorem{lemma}[theorem]{Lemma}
\theoremstyle{definition}
\theoremstyle{remark}
\newtheorem{remark}[theorem]{Remark}
\newcommand\CC{{\mathbb C}}
\newcommand\RR{{\mathbb R}}
\def\SO{\mathop{\rm SO}\nolimits}
\def\Re{\mathop{\rm Re}\nolimits}
\def\Im{\mathop{\rm Im}\nolimits}
\def\blfootnote{\xdef\@thefnmark{}\@footnotetext}
\begin{document}

\title[Homogeneous Hypersurfaces in $\CC^3$]{On homogeneous 
hypersurfaces in $\CC^3$}\blfootnote{{\bf Mathematics Subject Classification:} 32C09, 32V40.} \blfootnote{{\bf Keywords:} global embeddability of CR-manifolds in complex space.}
\author[Isaev]{Alexander Isaev}

\address{Mathematical Sciences Institute\\
Australian National University\\
Acton, ACT 2601, Australia}
\email{alexander.isaev@anu.edu.au}

\maketitle

\thispagestyle{empty}

\pagestyle{myheadings}

\begin{abstract} 
We consider a family $M_t^n$, with $n\ge 2$, $t>1$, of real hypersurfaces in a complex affine $n$-dimensional quadric arising in connection with the classification of homogeneous compact simply-connected real-analytic hypersurfaces in\, $\CC^n$ due to Morimoto and Nagano. To finalize their classification, one needs to resolve the problem of the embeddability of $M_t^n$ in\, $\CC^n$ for $n=3,7$. In our earlier article we showed that $M_t^7$ is not embeddable in\, $\CC^7$ for every $t$ and that $M_t^3$ is embeddable in\, $\CC^3$ for all $1<t<1+10^{-6}$. In the present paper, we improve on the latter result by showing that the embeddability of $M_t^3$ in fact takes place for $1<t<\sqrt{(2+\sqrt{2})/3}$. This is achieved by analyzing the explicit totally real embedding of the sphere $S^3$ in $\CC^3$ constructed by Ahern and Rudin. For $t\ge\sqrt{(2+\sqrt{2})/3}$ the problem of the embeddability of $M_t^3$ remains open.
\end{abstract}

\section{Introduction}\label{intro}
\setcounter{equation}{0}

For $n\ge 2$, consider the $n$-dimensional affine quadric in $\CC^{n+1}$:
\begin{equation}
Q^n:=\{(z_1,\dots,z_{n+1})\in\CC^{n+1}:z_1^2+\dots+z_{n+1}^2=1\}.\label{quadric}
\end{equation}
The group $\SO(n+1,\RR)$ acts on $Q^n$, with the orbits of the action being the sphere $S^n=Q^n\cap\RR^{n+1}$ as well as the compact pairwise CR-nonequivalent strongly pseudoconvex hypersurfaces
\begin{equation}
M_t^n:=\{(z_1,\dots,z_{n+1})\in\CC^{n+1}: |z_1|^2+\dots+|z_{n+1}|^2=t\}\cap Q^n,\,\, t>1,\label{mtn}
\end{equation}
which are simply-connected for $n\ge 3$. The codimension 1 orbits $M_t^n$ play an important role in the classical paper \cite{MN} whose main objective was the determination of all compact simply-connected real-analytic hypersurfaces in $\CC^n$ homogeneous under an action of a Lie group by CR-transfor\-ma\-tions. Namely, it was shown in \cite{MN} that every such hypersurface is CR-equivalent to either the sphere $S^{2n-1}$ or, for $n=3,7$, to the manifold $M_t^n$ for some $t$. However, the question of the existence of a real-analytic CR-embedding of $M_t^n$ in $\CC^n$ for $n=3,7$ was not resolved, thus the classification in these two dimensions was not completed.

The family $M_t^n$ was studied in our earlier paper \cite{I}. In particular, in \cite[Corollary 2.1]{I} we observed that a necessary condition for the existence of a real-analytic CR-embedding of $M_t^n$ in $\CC^n$ is the embeddability of the sphere $S^n$ in $\CC^n$ as a totally real submanifold. The problem of the existence of a totally real embedding of $S^n$ in $\CC^n$ was considered by Gromov (see \cite{G1} and \cite[p.~193]{G2}), Stout-Zame (see \cite{SZ}), Ahern-Rudin (see \cite{AR}), Forstneri\v c (see \cite{F1}, \cite{F2}, \cite{F3}). It has turned out that $S^n$ admits a smooth totally real embedding in $\CC^n$ only for $n=3$, hence \cite[Corollary 2.1]{I} implies, in particular, that $M_t^7$ cannot be real-analytically CR-embedded in $\CC^7$. On the other hand, since $S^3$ is a totally real submanifold of $Q^3$, any real-analytic totally real embedding of $S^3$ in $\CC^3$ (which is known to exist, for instance, by \cite{AR}) extends to a biholomorphic map defined in a neighborhood of $S^3$ in $Q^3$. Owing to the fact that $M_t^3$ accumulate to $S^3$ as $t\ra 1$, this observation implies that $M_t^3$ admits a real-analytic CR-embedding in $\CC^3$ for all $t$ sufficiently close to 1. Thus, the classification of homogeneous compact simply-connected real-analytic hypersurfaces in complex dimension 3 is special as it includes manifolds other than the sphere $S^5$.

More precisely, in \cite[Theorem 3.1]{I} we showed that $M_t^3$ embeds in $\CC^3$ for all $1<t<1+10^{-6}$. This was proved by analyzing the holomorphic continuation\linebreak $F:\CC^4\ra\CC^3$ of the explicit polynomial totally real embedding of $S^3$ in $\CC^3$ constructed in \cite{AR}. Also, in \cite[Conjecture 3.1]{I} we stated that the map $F$ should in fact yield an embedding for all $1<t<\sqrt{(2+\sqrt{2})/3}$. In the present paper we confirm the conjecture and therefore obtain the following result:

\begin{theorem}\label{main1} 
The hypersurfaces $M_t^3$ admit a real-analytic CR-embedding in $\CC^3$ for $1<t<\sqrt{(2+\sqrt{2})/3}$.
\end{theorem}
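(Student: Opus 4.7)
The plan is to pick up directly from \cite[Theorem 3.1]{I}, which established that the holomorphic extension $F\co\CC^4\to\CC^3$ of the Ahern--Rudin polynomial totally real embedding of $S^3$ in $\CC^3$ is a CR-embedding of $M_t^3$ for $1<t<1+10^{-6}$, and to push the analysis all the way up to the conjectured threshold $t=\sqrt{(2+\sqrt{2})/3}$ of \cite[Conjecture 3.1]{I}. Being a CR-embedding amounts to two separate conditions on $F|_{M_t^3}$: it is an immersion at every point, and it is globally injective. I would attack these independently and then take the smaller of the two resulting upper bounds on $t$.

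For the immersion property I would parametrize $M_t^3\subset Q^3$ by writing $z=x+iy$ with $x,y\in\RR^4$ satisfying $|x|^2-|y|^2=1$, $\langle x,y\rangle=0$, $|x|^2+|y|^2=t$, and compute the real Jacobian of $F$ restricted to $M_t^3$ in these variables. Since $F$ is known to be an immersion on $S^3$ (the case $y=0$, $t=1$), the question becomes: what is the first value of $t>1$ at which the Jacobian determinant vanishes somewhere on $M_t^3$? Whatever residual symmetry the Ahern--Rudin map carries should reduce this determinant to a function of a short list of $\SO(4,\RR)$-invariants, from which the critical $t$ can be read off as the first positive root of an explicit polynomial in $t^2$.

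For injectivity I would use the explicit polynomial form of $F$ to analyze the equation $F(p)=F(q)$ for $p,q\in M_t^3$. The strategy is to convert this system into a polynomial identity in a small number of real parameters — for instance, Gram-type invariants of the configuration $(p,q)$, or angles fixing $q$ relative to a standard $p$ — and to locate the smallest $t>1$ at which a non-trivial real solution $p\ne q$ first appears. The specific shape of the bound $\sqrt{(2+\sqrt{2})/3}$, a nested radical involving $\sqrt{2}$, strongly suggests that the threshold will arise from a quadratic equation in $3t^2$ whose discriminant evaluates to $2$; isolating this quadratic is the concrete target of the injectivity analysis.

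The main obstacle will be the global injectivity check. Immersion failure is a codimension-$1$ local phenomenon governed by the vanishing of a single polynomial, whereas non-injectivity requires controlling a real subvariety of $M_t^3\times M_t^3$ and proving that it remains disjoint from the diagonal-complement for all $t<\sqrt{(2+\sqrt{2})/3}$. I expect the key to be the identification of a distinguished pair $(p_t,q_t)\in M_t^3\times M_t^3$ that first saturates the equation $F(p_t)=F(q_t)$ at exactly the predicted threshold, followed by a monotonicity argument (likely leveraging the strict pseudoconvexity of $M_t^3$ and the polynomial structure of $F$) ruling out any earlier coincidences. If this identification succeeds and the threshold it produces turns out to be smaller than the immersion threshold, it will both confirm \cite[Conjecture 3.1]{I} and yield the stated theorem.
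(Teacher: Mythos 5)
Your overall split into an immersion step and an injectivity step matches the paper's structure, and your guess that the immersion threshold will come from a quadratic relation in $t^2$ is on target: after passing to the coordinates $w_1=z_1+iz_2,\ w_2=z_1-iz_2,\ w_3=z_3+iz_4,\ w_4=z_3-iz_4$, the quadric becomes $w_1w_2+w_3w_4=1$, the Jacobian of $F|_{Q^3}$ vanishes exactly when $w_3w_4=(3\pm\sqrt 2 - i)/6$, and minimizing $|w_1|^2+|w_2|^2+|w_3|^2+|w_4|^2$ subject to that constraint and the quadric relation gives precisely $2\sqrt{(2+\sqrt 2)/3}$. Your instinct to work in real $\SO(4,\RR)$-invariants would reproduce this, but the $w_j$-coordinates are what make the computation short, because the quadric and the map both factor through the single invariant $a:=w_3w_4$.

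Where your plan goes wrong is in the injectivity step, and the misconception is structural, not merely technical. You are looking for a distinguished pair $(p_t,q_t)$ with $F(p_t)=F(q_t)$ that ``first saturates'' at $t=\sqrt{(2+\sqrt 2)/3}$, to be followed by a monotonicity argument. No such pair exists at that value of $t$: the threshold $\sqrt{(2+\sqrt 2)/3}$ is where the \emph{immersion} condition first fails, while injectivity on $M_t^3$ demonstrably persists beyond it --- the paper exhibits genuine coincidences in fibers of $\tilde F$ only from $t\ge\sqrt 2$ onward (the triple $W_u,W_u',W_u''$ of Remark~\ref{remarkinj}), and leaves the exact injectivity threshold undetermined. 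So a ``first time a collision appears'' argument has no correct target below $\sqrt{(2+\sqrt 2)/3}$, and a monotonicity claim built around it would be chasing a phantom. What the paper actually does is the following contradiction argument: since $F$ fixes $w_1,w_3$ and the quadric determines $w_2$ from $w_3w_4$, any two preimages in $Q^3$ differ only in $w_4$, and after eliminating $w_2,\hat w_2$ one gets an explicit cubic in $\hat w_4$ with the root $\hat w_4=w_4$ factored out, leaving a quadratic. If a second preimage $\hat W$ also lay on $M_t^3$, the two norm equations $|w_1|^2+|1-a|^2/|w_1|^2+|w_3|^2+|a|^2/|w_3|^2=2t$ (for $a=w_3w_4$ and $\hat a=w_3\hat w_4$) would force $|w_3|^2$ to solve a real quadratic $A|w_3|^4-2t|w_3|^2+B=0$, and one shows $AB\ge(2+\sqrt 2)/3>t^2$ everywhere on the relevant ellipse $\{|1-a|+|a|<\sqrt{(2+\sqrt 2)/3}\}$, so the discriminant is negative and no positive root exists. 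This elimination-plus-discriminant argument, and the reduction of the whole problem to the single complex parameter $a$ ranging over an explicit ellipse, are the two ideas your plan is missing; without them the ``global injectivity check'' you flag as the main obstacle remains unresolved.
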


Before proceeding with the proof of the theorem we note that, although the problem of the embeddability of $M_t^3$ remains open for $t\ge \sqrt{(2+\sqrt{2})/3}$, one might be able to approach it by utilizing other totally real embeddings of $S^3$ in $\CC^3$ found in \cite{AR} (see Remark \ref{othermaps}). 

{\bf Acknowledgment.} This work is supported by the Australian Research Council.

\section{Proof of Theorem \ref{main1}}\label{sect2}
\setcounter{equation}{0}

As stated in the introduction, our argument is based on analyzing the holomorphic continuation of the explicit totally real embedding of $S^3$ in $\CC^3$ constructed in \cite{AR}. Let $(z,w)$ be coordinates in $\CC^2$ and let $S^3$ be realized in the standard way as the subset of $\CC^2$ given by
$$
S^3=\{(z,w)\in\CC^2: |z|^2+|w|^2=1\}.
$$
The Ahern-Rudin map, which embeds $S^3$ in $\CC^3$ as a totally real submanifold, is defined on all of $\CC^2$ as follows:
\begin{equation}
f:\CC^2\ra\CC^3,\quad f(z,w):=(z,w,w\overline{z}\overline{w}^2+iz\overline{z}^2\overline{w}).\label{mapf}
\end{equation}

Now, consider $\CC^4$ with coordinates $z_1,z_2,z_3,z_4$ and embed $\CC^2$ in $\CC^4$ as the totally real subspace $\RR^4$:
$$
(z,w)\mapsto(\hbox{Re}\,z,\hbox{Im}\,z,\hbox{Re}\,w,\hbox{Re}\,w).
$$
Clearly, the push-forward of the polynomial map $f$ extends from $\RR^4$ to a holomorphic map $F:\CC^4\ra\CC^3$ by the formula
\begin{equation}
\begin{array}{l}
\displaystyle F(z_1,z_2,z_3,z_4):=\Bigl(z_1+iz_2,z_3+iz_4,\\
\hspace{1.1cm}\displaystyle(z_3+iz_4)(z_1-iz_2)(z_3-iz_4)^2+i(z_1+iz_2)(z_1-iz_2)^2(z_3-iz_4)\Bigr).
\end{array}\label{extenmap}
\end{equation}
It will be convenient for us to argue in the coordinates
\begin{equation}
w_1:=z_1+iz_2,\,\,w_2:=z_1-iz_2,\,\,w_3:=z_3+iz_4,\,\,w_4:=z_3-iz_4.\label{coordw}
\end{equation}
In these coordinates 
the quadric $Q^3$ takes the form
\begin{equation}
\left\{(w_1,w_2,w_3,w_4)\in\CC^4: w_1w_2+w_3w_4=1\right\}\label{qnew}
\end{equation}
(see (\ref{quadric})), the sphere $S^3\subset Q^3$ the form
$$
\left\{(w_1,w_2,w_3,w_4)\in\CC^4: w_2=\bar w_1,\,w_4=\bar w_3\right\}\cap Q^3,\label{sphereq}
$$
the hypersurface $M_t^3\subset Q^3$ the form
\begin{equation}
\left\{(w_1,w_2,w_3,w_4)\in\CC^4: |w_1|^2+|w_2|^2+|w_3|^2+|w_4|^2=2t\right\}\cap Q^3\label{formm7}
\end{equation}
(see (\ref{mtn})), and the map $F$ the form
\begin{equation}
\displaystyle (w_1,w_2,w_3,w_4)\mapsto\Bigl(w_1,w_3,w_2w_3w_4^2+iw_1w_2^2w_4\Bigr)\label{map}
\end{equation}
(see (\ref{extenmap})).

Clearly, $F$ yields an embedding of $M_t^3$ in $\CC^3$ if its restriction $\tilde F:=F|_{Q^3}$ is nondegenerate and injective on $M_t^3$, therefore it is important to investigate the nondegeneracy and injectivity properties of $\tilde F$. 

Let us start with nondegeneracy. In \cite[Proposition 3.1]{I} we gave a necessary and sufficient condition for $\tilde F$ to degenerate somewhere on $M_t^3$. As the argument is quite short, we repeat it here--with additional details--for the sake of the completeness of our exposition.      

\begin{proposition}\label{embed} The map $\tilde F$ degenerates at some point of $M_t^3$ if and only if\linebreak $t\ge\sqrt{(2+\sqrt{2})/3}$.  
\end{proposition}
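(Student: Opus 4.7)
The plan is to compute $\ker d\tilde F_p = \ker dF_p \cap T_p Q^3$ directly. The degeneracy condition will reduce to a polynomial equation in the products $a := w_1 w_2$ and $b := w_3 w_4$, which are linked by the single relation $a + b = 1$ coming from (\ref{qnew}).

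Reading off (\ref{map}), the first two components of $dF_p v$ immediately force $v_1 = v_3 = 0$, and the third component then yields
\[
(w_3 w_4^2 + 2i w_1 w_2 w_4)\, v_2 + (2 w_2 w_3 w_4 + i w_1 w_2^2)\, v_4 = 0,
\]
while the defining equation $w_2 v_1 + w_1 v_2 + w_4 v_3 + w_3 v_4 = 0$ of $T_p Q^3$ collapses to $w_1 v_2 + w_3 v_4 = 0$. Since (\ref{qnew}) rules out $w_1 = w_3 = 0$, a nonzero $(v_2, v_4)$ exists iff the $2\times 2$ determinant of these two linear forms vanishes; a brief expansion brings this into the symmetric form
\[
b^2 - i a^2 + 2(i-1)\, ab = 0.
\]

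Plugging in $b = 1 - a$ produces the quadratic $3(1-i)a^2 - 2(2-i)a + 1 = 0$; its discriminant is $-4i$, with square roots $\pm\sqrt{2}(1-i)$, and after rationalizing one obtains exactly the two values $a_\pm = ((3\pm\sqrt{2}) + i)/6$ with matching $b_\mp = ((3\mp\sqrt{2}) - i)/6$. Then $|a|^2 = (2\pm\sqrt{2})/6$ and $|b|^2 = (2\mp\sqrt{2})/6$, so $|a||b| = \sqrt{2}/6$ and $(|a|+|b|)^2 = 2/3 + \sqrt{2}/3 = (2+\sqrt{2})/3$; in particular both roots yield the same value $|a| + |b| = \sqrt{(2+\sqrt{2})/3}$, reflecting the $a \leftrightarrow b$ symmetry of the original equation.

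It remains to translate the condition $w_1 w_2 = a_\pm$, $w_3 w_4 = b_\mp$ into a condition on $t$. By AM-GM, $|w_1|^2 + |w_2|^2 \geq 2|a|$ with equality exactly when $|w_1| = |w_2| = \sqrt{|a|}$, and the sum attains every value in $[2|a|,\infty)$ as $|w_1|$ ranges over $(0,\infty)$; likewise for $(w_3, w_4)$. Hence the fibre $\{w_1 w_2 = a_\pm,\, w_3 w_4 = b_\mp\} \subset Q^3$ meets $M_t^3$ precisely when $t \geq |a| + |b| = \sqrt{(2+\sqrt{2})/3}$, which gives both directions of the proposition. The main nuisance will be the bookkeeping in the $2\times 2$ determinant expansion leading to the symmetric form, together with the nested-radical identity $\sqrt{2+\sqrt{2}} + \sqrt{2-\sqrt{2}} = \sqrt{4+2\sqrt{2}}$ used at the end.
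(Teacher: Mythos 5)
Your proof is correct and reaches the same degeneracy locus and the same threshold as the paper, by a mildly different route. The paper works in two local coordinate charts on $Q^3$: on $\{w_1\ne 0\}$ it writes $J_{\tilde F}=\partial\varphi/\partial w_4$ and deduces $w_3w_4=(3\pm\sqrt 2-i)/6$, then repeats on $\{w_3\ne 0\}$ to get $w_1w_2=(3\pm\sqrt 2+i)/6$, and only by combining the two charts concludes that every degeneracy point has $w_1\ne 0$, $w_3\ne 0$. You avoid the chart-splitting by computing $\ker dF_p\cap T_pQ^3$ in the ambient $\CC^4$: the two linear conditions on $(v_2,v_4)$ give a $2\times 2$ determinant, which becomes the single equation $b^2-ia^2+2(i-1)ab=0$ in $a=w_1w_2$, $b=w_3w_4$, and with $a+b=1$ this yields the same two pairs $(a_\pm,b_\mp)=\bigl(((3\pm\sqrt 2)+i)/6,\ ((3\mp\sqrt 2)-i)/6\bigr)$ in one pass (the fact that these are all nonzero then makes $w_1\ne 0$, $w_3\ne 0$ automatic). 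For the translation into a condition on $t$, the paper invokes its Lemma~\ref{min} for necessity and constructs an explicit point $W_0$ for sufficiency, while you get both implications simultaneously by noting that $|w_1|^2+|a|^2/|w_1|^2$ ranges over $[2|a|,\infty)$ and likewise for $w_3$. So the underlying ideas coincide; your version is a bit more intrinsic and slightly more economical in the converse direction.

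One small inaccuracy: you call $b^2-ia^2+2(i-1)ab=0$ the ``symmetric form'' and later attribute $|a_+|+|b_-|=|a_-|+|b_+|$ to an ``$a\leftrightarrow b$ symmetry of the original equation.'' The polynomial is \emph{not} symmetric under $a\leftrightarrow b$ (swapping gives $a^2-ib^2+2(i-1)ab$, a different equation). What actually makes the two roots give the same $|a|+|b|$ is the conjugate relation $b_\pm=\overline{a_\pm}$, hence $|a_\pm|=|b_\pm|$. The computation you carry out (reading off $|a|^2=(2\pm\sqrt 2)/6$, $|b|^2=(2\mp\sqrt 2)/6$) already proves the needed equality, so this mislabeling is cosmetic and does not affect the argument.
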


\begin{proof}
Observe that $|w_1|+|w_3|>0$ on $Q^3$. For $w_1\ne 0$ we choose $w_1,w_3,w_4$ as local coordinates on $Q^3$ and write the third component of $\tilde F$ as
$$
\varphi:=\frac{1-w_3w_4}{w_1}(iw_4+(1-i)w_3w_4^2)
$$
(see (\ref{qnew}), (\ref{map})). Then the Jacobian $J_{\tilde F}$ of $\tilde F$ is equal to
\begin{equation}
\frac{\partial\varphi}{\partial w_4}=\frac{(3i-3)w_3^2w_4^2+(2-4i)w_3w_4+i}{w_1},\label{phieq}
\end{equation}
hence it vanishes if and only if
\begin{equation}
w_3w_4=\frac{3\pm\sqrt{2}-i}{6}.\label{degenpoint}
\end{equation}
At such points using (\ref{qnew}) we obtain
\begin{equation}
|w_1|^2+|w_2|^2+|w_3|^2+|w_4|^2=|w_1|^2+\frac{2\mp\sqrt{2}}{6|w_1|^2}+|w_3|^2+\frac{2\pm\sqrt{2}}{6|w_3|^2}.\label{est1}
\end{equation}

Analogously, if $w_3\ne 0$, we choose $w_1,w_2,w_3$ as local coordinates on $Q^3$ and write the third component of $\tilde F$ as
$$
\psi:=\frac{1-w_1w_2}{w_3}(w_2+(i-1)w_1w_2^2)
$$
(see (\ref{qnew}), (\ref{map})). Then
$$
J_{\tilde F}=-\frac{\partial\psi}{\partial w_2}=-\frac{(3-3i)w_1^2w_2^2+(2i-4)w_1w_2+1}{w_3},
$$
which vanishes if and only if
$$
w_1w_2=\frac{3\pm\sqrt{2}+i}{6}.
$$
Hence for all points of degeneracy of $\tilde F$ we have $w_1\ne 0$, $w_3\ne 0$, and therefore such points are described as the zeroes of ${\partial\varphi}/{\partial w_4}$ or, equivalently, as the zeroes of ${\partial\psi}/{\partial w_2}$.

We now need the following elementary lemma, which we state without proof:

\begin{lemma}\label{min}
For fixed $p>0$, $q>0$, let
$$
g(x,y):=x+\frac{p}{x}+y+\frac{q}{y},\quad x>0,\,y>0.
$$
Then $\min_{x>0,y>0} g(x,y)=2(\sqrt{p}+\sqrt{q})$.
\end{lemma}

Set $p=(2+\sqrt{2})/6$, $q=(2-\sqrt{2})/6$ and observe that $\sqrt{p}+\sqrt{q}=\sqrt{(2+\sqrt{2})/3}$. From formulas (\ref{formm7}), (\ref{est1}) combined with Lemma \ref{min} one now easily deduces that if $J_{\tilde F}$ vanishes at a point of $M_t^3$, then $t\ge\sqrt{(2+\sqrt{2})/3}$. Conversely, for any $t\ge\sqrt{(2+\sqrt{2})/3}$ there exist $x_0>0$, $y_0>0$ such that $g(x_0,y_0)=2t$. It then follows from (\ref{qnew}), (\ref{formm7}), (\ref{degenpoint}) that the point
$$
W_0:=\left(\sqrt{x_0},\frac{3+\sqrt{2}+i}{6\sqrt{x_0}},\sqrt{y_0},\frac{3-\sqrt{2}-i}{6\sqrt{y_0}}\right)
$$
lies in $M_t^3$ and that $J_{\tilde F}$ vanishes at $W_0$. The proof is complete. \end{proof}

Next, we study the fibers of $\tilde F$. 

\begin{proposition}\label{propfibers}
Let two points $W=(w_1,w_2,w_3,w_4)$ and $\hat W=(\hat w_1,\hat w_2,\hat w_3,\hat w_4)$ lie in $Q^3$ and assume that $\tilde F(W)=\tilde F(\hat W)$. Then the following holds:
\begin{itemize}
\item[{\rm (a)}] $\hat w_1=w_1$, $\hat w_3=w_3$;

\item[{\rm (b)}] if $w_1=0$ or $w_3=0$, then $\hat W=W$;

\item[{\rm (c)}] if $w_1\ne 0$ and $w_3\ne 0$, then either $\hat W=W$ or
\begin{equation}
\hat w_4=\frac{2i-1+(1-i)w_3w_4+\sqrt{6iw_3^2w_4^2-(2+6i)w_3w_4+1}}{(2i-2)w_3};\label{solquadr}
\end{equation}

\item[{\rm (d)}] neither of the two values in the right-hand side of {\rm (\ref{solquadr})} is equal to $w_4$ if $W\in M_t^3$ for $t<\sqrt{(2+\sqrt{2})/3}$;

\item[{\rm (e)}] the two values in the right-hand side of {\rm (\ref{solquadr})} are distinct if $W\in M_t^3$ for $t<\sqrt{(2+\sqrt{2})/3}$.

\end{itemize}
Hence, the fiber $\tilde F^{-1}(\tilde F(W))$ consists of at most three points, and, if $W\in M_t^3$ with $w_1\ne 0$, $w_3\ne 0$ for $t<\sqrt{(2+\sqrt{2})/3}$, it consists of exactly three points.

\end{proposition}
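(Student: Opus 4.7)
The plan is to exploit that $\tilde F$ has a very explicit form. Since the first two components of $F$ in (\ref{map}) are simply $w_1$ and $w_3$, part (a) follows immediately from $\tilde F(W)=\tilde F(\hat W)$. For (b), the quadric relation (\ref{qnew}) forces, when $w_1=0$, that $w_3w_4=1$ and hence $w_3,w_4\ne 0$; combined with (a) this gives $\hat w_4=1/\hat w_3=1/w_3=w_4$, and then equating third components---which in this case reduce to $w_2w_3w_4^2$---forces $\hat w_2=w_2$. The case $w_3=0$ is symmetric, with the third component reducing to $iw_1w_2^2w_4$.

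For (c), assuming $w_1,w_3\ne 0$, I will use $(w_1,w_3,w_4)$ as local coordinates on $Q^3$ and the expression $\varphi=\frac{1-w_3w_4}{w_1}(iw_4+(1-i)w_3w_4^2)$ for the third component of $\tilde F$ from the proof of Proposition \ref{embed}. In view of (a), the condition $\tilde F(W)=\tilde F(\hat W)$ becomes $\varphi(w_1,w_3,\hat w_4)=\varphi(w_1,w_3,w_4)$; since $\varphi$ is cubic in its last argument, the difference factors as $(\hat w_4-w_4)$ times a quadratic in $\hat w_4$. A direct discriminant computation (using reductions such as $(1-i)^2=-2i$ and $(1-2i)^2+4i(1-i)=1$) will identify the two roots of this quadratic with the two branches of the formula (\ref{solquadr}).

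For (d) and (e) the task is to rule out degenerate configurations of this quadratic on $M_t^3$ when $t<\sqrt{(2+\sqrt{2})/3}$. In (d), substituting $\hat w_4=w_4$ and setting $u:=w_3w_4$ yields $3(1-i)u^2-2(1-2i)u-i=0$, whose roots simplify to $u=(3\pm\sqrt{2}-i)/6$---exactly the degenerate values in (\ref{degenpoint}). By Proposition \ref{embed} this forces $t\ge\sqrt{(2+\sqrt{2})/3}$, a contradiction. In (e), the two roots of the quadratic coincide iff its discriminant $6iu^2-(2+6i)u+1$ vanishes, i.e.\ $u=(3\pm 2\sqrt{2}-i)/6$. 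Then $|w_1w_2|^2=(3\mp 2\sqrt{2})/6$ and $|w_3w_4|^2=(3\pm 2\sqrt{2})/6$, so Lemma \ref{min} applied with $x=|w_1|^2$, $y=|w_3|^2$ and these values of $p,q$ gives $2t\ge 2(\sqrt{p}+\sqrt{q})$. The identities $p+q=1$ and $pq=1/36$ yield $(\sqrt{p}+\sqrt{q})^2=4/3$, hence $t\ge 2/\sqrt{3}$; and $(2/\sqrt{3})^2=4/3>(2+\sqrt{2})/3$, again a contradiction.

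The final counting assertion is then immediate: by (b) and (c) the fiber $\tilde F^{-1}(\tilde F(W))$ is contained in the set consisting of $W$ together with the two roots of (\ref{solquadr}), so it has at most three elements; and when $w_1w_3\ne 0$ with $t<\sqrt{(2+\sqrt{2})/3}$, parts (d) and (e) ensure these three points are pairwise distinct. I do not anticipate a serious conceptual obstacle---the argument is mostly careful algebraic bookkeeping---but the two delicate points will be the discriminant simplification in (c) that must collapse to the clean radicand of (\ref{solquadr}), and the numerical comparison $2/\sqrt{3}>\sqrt{(2+\sqrt{2})/3}$ underwriting (e).
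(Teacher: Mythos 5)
Your proposal is correct and follows essentially the same route as the paper: equate third components via the local coordinate expression (equivalently, the paper substitutes $w_2=(1-w_3w_4)/w_1$), factor out $(\hat w_4-w_4)$ from the resulting cubic, and rule out the degenerate root configurations using Proposition \ref{embed} and Lemma \ref{min} with the same numerics $u=(3\pm\sqrt2-i)/6$ and $u=(3\pm2\sqrt2-i)/6$. The only cosmetic difference is that you spell out part (b) and the $2/\sqrt3$ computation a bit more explicitly, and in (d) you identify the roots directly with the degeneracy locus rather than pointing to the Jacobian formula; the content is identical.
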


\begin{proof} Part (a) is immediate from (\ref{map}). Furthermore, (\ref{map}) yields
\begin{equation}
\hat w_2w_3\hat w_4^{2}+iw_1\hat w_2^{2}\hat w_4=w_2w_3w_4^2+iw_1w_2^2w_4,\label{mainids}
\end{equation}
which together with (\ref{qnew}) implies part (b).    

From now on, we assume that $w_1\ne 0$ and $w_3\ne 0$. Then using (\ref{qnew}) we substitute
\begin{equation}
w_2=\frac{1-w_3w_4}{w_1},\quad \hat w_2=\frac{1-w_3\hat w_4}{w_1}\label{expr234}
\end{equation}
into (\ref{mainids}) and simplifying the resulting expression obtain
\begin{equation}
\begin{aligned}[b]
&(\hat w_4-w_4)\Bigl[(i-1)w_3^2\,\hat w_4^2+ \Bigl((1-2i)w_3+(i-1)w_3^2w_4\Bigr)\hat w_4+\\
&\hspace{4cm}\Bigl(i+(1-2i)w_3w_4+(i-1)w_3^2w_4^2\Bigr)\Bigr]=0.
\end{aligned}\label{mainids4}
\end{equation}
We treat identity (\ref{mainids4}) as an equation with respect to $\hat w_4$. By part (a) and formula (\ref{expr234}), the solution $\hat w_4=w_4$ leads to the point $W$. Further, the solutions of the quadratic equation  
\begin{equation}
\begin{aligned}[b]
&(i-1)w_3^2\,\hat w_4^2+ \Bigl((1-2i)w_3+(i-1)w_3^2w_4\Bigr)\hat w_4+\\
&\hspace{3cm}\Bigl(i+(1-2i)w_3w_4+(i-1)w_3^2w_4^2\Bigr)=0
\end{aligned}\label{qudreq}
\end{equation}
are given by formula (\ref{solquadr}). This establishes part (c). 

Next, for $\hat w_4=w_4$ equation (\ref{qudreq}) becomes
$$
(3i-3)w_3^2w_4^2+(2-4i)w_3w_4+i=0,
$$
which by formula (\ref{phieq}) implies that the Jacobian $J_{\tilde F}$ vanishes at $W$. Therefore, part (d) follows from Proposition \ref{embed}.

Finally, suppose that $6iw_3^2w_4^2-(2+6i)w_3w_4+1=0$. Then we have
$$
w_3w_4=\frac{3\pm 2\sqrt{2}-i}{6}.
$$
At such points using (\ref{expr234}) we obtain
\begin{equation}
|w_1|^2+|w_2|^2+|w_3|^2+|w_4|^2=|w_1|^2+\frac{3\mp2\sqrt{2}}{6|w_1|^2}+|w_3|^2+\frac{3\pm2\sqrt{2}}{6|w_3|^2}.\label{est11}
\end{equation}
From formulas (\ref{formm7}), (\ref{est11}) combined with Lemma \ref{min} for $p=(3+2\sqrt{2})/6$,\linebreak $q=(3-2\sqrt{2})/6$ one now deduces that $t\ge 2/\sqrt{3}$. Since $2/\sqrt{3}>\sqrt{(2+\sqrt{2})/3}$, part (e) follows. The proof is complete. \end{proof}

Propositions \ref{embed}, \ref{propfibers} and formula (\ref{expr234}) imply that in order to establish Theorem \ref{main1} it suffices to show that for every value $t<\sqrt{(2+\sqrt{2})/3}$ and every point $W=(w_1,(1-w_3 w_4)/w_1,w_3,w_4)\in M_t^3$ with $w_1\ne 0$, $w_3\ne 0$, the point $\hat W:=\left(w_1,(1-w_3\hat w_4)/w_1,w_3,\hat w_4\right)$ does not lie in $M_t^3$ for any of the two choices of $\hat w_4$ in (\ref{solquadr}).

Let
\begin{equation}
{\mathcal E}:=\left\{z\in\CC: |1-z|+|z|<\sqrt{(2+\sqrt{2})/3}\right\}.\label{domainE}
\end{equation}
Clearly, ${\mathcal E}$ is the domain in $\CC$ bounded by the ellipse
$$
\left\{z\in\CC: |1-z|+|z|=\sqrt{(2+\sqrt{2})/3}\right\},
$$
which has foci at $1$ and $0$. We will need the following lemma:

\begin{lemma}\label{ellipse}
If a point $W=(w_1,(1-w_3 w_4)/w_1,w_3,w_4)$ with $w_1\ne 0$, $w_3\ne 0$ lies in $M_t^3$ for some $t<\sqrt{(2+\sqrt{2})/3}$, then $w_3w_4\in {\mathcal E}$. Moreover, for every $z\in {\mathcal E}$ there exists $t<\sqrt{(2+\sqrt{2})/3}$ and $W=(w_1,(1-w_3 w_4)/w_1,w_3,w_4)\in M_t^3$ with $w_1\ne 0$, $w_3\ne 0$ such that $z=w_3w_4$.
\end{lemma}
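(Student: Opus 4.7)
The plan is to apply the arithmetic–geometric mean inequality to the two pairs $(|w_1|^2,|w_2|^2)$ and $(|w_3|^2,|w_4|^2)$. Writing $z:=w_3w_4$ and using $w_2=(1-z)/w_1$ from (\ref{qnew}), one has
$$
|w_1|^2+|w_2|^2 \;=\; |w_1|^2+\frac{|1-z|^2}{|w_1|^2} \;\ge\; 2|1-z|,
$$
with equality iff $|w_1|^2=|1-z|$, and similarly $|w_3|^2+|w_4|^2\ge 2|w_3w_4|=2|z|$, with equality iff $|w_3|=|w_4|$. Adding these two estimates and invoking the defining equation (\ref{formm7}) of $M_t^3$ yields $|1-z|+|z|\le t<\sqrt{(2+\sqrt{2})/3}$, which is precisely the condition $z\in{\mathcal E}$. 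This settles the first assertion.

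For the converse, given $z\in{\mathcal E}$, the plan is to build $W$ by picking $w_3,w_4,w_1$ in turn so as to nearly saturate the above inequalities. Set $s:=|1-z|+|z|$. By the triangle inequality applied to $1=(1-z)+z$ one has $1\le s<\sqrt{(2+\sqrt{2})/3}$, so the open interval $\bigl(\max(s,1),\,\sqrt{(2+\sqrt{2})/3}\bigr)$ is nonempty; when $s=1$ this uses the numerical fact $\sqrt{(2+\sqrt{2})/3}>1$. Fix any $t$ in this interval. If $z\ne 0$, pick $w_3:=\sqrt{|z|}\,e^{i\alpha}$ and $w_4:=\sqrt{|z|}\,e^{i\beta}$ with $\alpha+\beta=\arg z$, so that $w_3w_4=z$ and $|w_3|^2+|w_4|^2=2|z|$; if $z=0$, take $w_4=0$ and any sufficiently small $w_3\ne 0$. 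Then choose $|w_1|>0$ satisfying
$$
|w_1|^2+\frac{|1-z|^2}{|w_1|^2}+|w_3|^2+|w_4|^2 \;=\; 2t,
$$
which is feasible because $x\mapsto x+|1-z|^2/x$ sweeps out $[2|1-z|,\infty)$ on $x>0$ and, by construction, $2t-(|w_3|^2+|w_4|^2)$ exceeds $2|1-z|$. Setting $w_2:=(1-z)/w_1$, the resulting point $W$ then lies in $M_t^3$, satisfies $w_3w_4=z$, and has $w_1\ne 0$, $w_3\ne 0$, as required.

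The main issue to watch is the boundary case $z\in[0,1]\cap\RR$, where $s=1$ and the forward AM–GM bound, if saturated exactly, would force $t=1$ (so $W\in S^3$) rather than $t>1$; here one must exploit the strict inequality $\sqrt{(2+\sqrt{2})/3}>1$ to secure a valid $t>1$. Beyond this minor piece of bookkeeping, the entire argument is purely algebraic.
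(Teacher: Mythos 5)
Your argument is correct and follows the same essential route as the paper: the forward direction is the arithmetic--geometric mean bound that underlies the paper's Lemma~\ref{min}, and the converse is an explicit construction that saturates those bounds in the $(w_3,w_4)$ pair. In fact your converse is slightly more careful than the published one: by choosing $t$ strictly between $s:=|1-z|+|z|$ and $\sqrt{(2+\sqrt{2})/3}$ and then solving the resulting equation for $|w_1|$, you treat every $z\in\mathcal{E}$ uniformly, including real $z\in(0,1)$, where $s=1$. The paper's choice $t=|1-z|+|z|$ with $W=(\sqrt{1-z},\sqrt{1-z},\sqrt{z},\sqrt{z})$ gives $t=1$ for such $z$, which is formally outside the range $t>1$ on which $M_t^3$ is defined; the paper singles out only $z=0$ and $z=1$ for a separate argument, but the same degeneration occurs for every real $z$ strictly between them. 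So your proof not only is valid but also quietly repairs a small boundary gap in the published version, at the modest cost of no longer exhibiting a point realizing the minimal value $t=s$.
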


\begin{proof}
Fix $W=(w_1,(1-w_3 w_4)/w_1,w_3,w_4)\in M_t^3$ with $w_1\ne 0$, $w_3\ne 0$ and let $a:=w_3w_4$. Then (\ref{formm7}) yields
\begin{equation}
|w_1|^2+\frac{|1-a|^2}{|w_1|^2}+|w_3|^2+\frac{|a|^2}{|w_3|^2}=2t.\label{condona}
\end{equation}
By Lemma \ref{min} with $p=|1-a|^2$, $q=|a|^2$, the left-hand side of (\ref{condona}) is bounded from below by $2(|1-a|+|a|)$. Hence, if $t<\sqrt{(2+\sqrt{2})/3}$, one has $a\in {\mathcal E}$. 

Now, fix $z\in {\mathcal E}$ and assume first that $z\ne 0,1$. Set $t=|1-z|+|z|$ and $W=(\sqrt{1-z}, \sqrt{1-z}, \sqrt{z}, \sqrt{z})$, where we choose one of the two possible values of the square root arbitrarily. In each of the remaining cases $z=0$ and $z=1$ we let $t=1.045$ and set $W=(1,1,0.3,0)$ and $W=(0.3,0,1,1)$, respectively. It is then clear that $t<\sqrt{(2+\sqrt{2})/3}$, the point $W$ lies in $M_t^3$, the first and third coordinates of $W$ are nonzero, and the product of the third and fourth coordinates of $W$ is equal to $z$. \end{proof}

We will now finalize the proof of the theorem. Fix $t<\sqrt{(2+\sqrt{2})/3}$ and a point $W=(w_1,(1-w_3 w_4)/w_1,w_3,w_4)\in M_t^3$ with $w_1\ne 0$, $w_3\ne 0$. Consider the point $\hat W:=\left(w_1,(1-w_3\hat w_4)/w_1,w_3,\hat w_4\right)$ for some choice of $\hat w_4$ in (\ref{solquadr}). Assume that $\hat W\in M_t^3$ and let $a:=w_3w_4$, $\hat a:=w_3\hat w_4$. Then by (\ref{solquadr}) we see
\begin{equation}
\hat a=\frac{2i-1+(1-i)a+\sqrt{6ia^2-(2+6i)a+1}}{2i-2}.\label{solquadr1}
\end{equation}
Also, recalling that formula (\ref{solquadr}) is derived from equation (\ref{qudreq}), we have
\begin{equation}
(i-1)\hat a^2+ (1-2i)\hat a+(i-1)\hat a a+(i-1)a^2+(1-2i)a+i=0.\label{qudreq1}
\end{equation}

Next, analogously to (\ref{condona}), by (\ref{formm7}) the following holds:
\begin{equation}
\begin{array}{l}
\displaystyle |w_1|^2+\frac{|1-a|^2}{|w_1|^2}+|w_3|^2+\frac{|a|^2}{|w_3|^2}=2t,\\
\vspace{-0.1cm}\\
\displaystyle |w_1|^2+\frac{|1-\hat a|^2}{|w_1|^2}+|w_3|^2+\frac{|\hat a|^2}{|w_3|^2}=2t.
\end{array}\label{condonab}
\end{equation}
Subtracting one equation in (\ref{condonab}) from the other yields
\begin{equation}
\frac{|1-a|^2-|1-\hat a|^2}{|w_1|^2}+\frac{|a|^2-|\hat a|^2}{|w_3|^2}=0.\label{subtract}
\end{equation}
Let us now show that neither of the numerators in (\ref{subtract}) is zero. Indeed, the vanishing of any one of them implies the vanishing of the other, which can only occur if either $\hat a=a$ or $\hat a=\bar a$. The first possibility means that $\hat w_4=w_4$ contradicting part (d) of Proposition \ref{propfibers}. Assuming that $\hat a=\bar a$, from identity (\ref{qudreq1}) we deduce
\begin{equation}
(2i-2)\Re (a^2)+(2-4i)\Re a+(i-1)|a|^2+i=0.\label{hatbara}
\end{equation}  
Separating the real and imaginary parts of (\ref{hatbara}) and adding up the resulting identities, we see that $\Re a=1/2$, which leads to $(\Im a)^2+1/4=0$ thus implying a contradiction in this case as well.

Hence, by (\ref{subtract}) we have
$$
|w_1|^2=\frac{|1-a|^2-|1-\hat a|^2}{|\hat a|^2-|a|^2}|w_3|^2.
$$
Plugging this expression into any of the identities in (\ref{condonab}) and simplifying the resulting formulas, we obtain the following quadratic equation with respect to $|w_3|^2$:
\begin{equation}
A|w_3|^4-2t|w_3|^2+B=0,\label{quadratic}
\end{equation}
where
\begin{equation}
A:=\frac{2(\Re\hat a-\Re a)}{|\hat a|^2-|a|^2},\quad B:=\frac{(1-2\Re a)|\hat a|^2-(1-2\Re \hat a)|a|^2}{|1-a|^2-|1-\hat a|^2}.\label{AB}
\end{equation}
Taking into account that $t<\sqrt{(2+\sqrt{2})/3}$, for the discriminant $\Delta$ of the quadratic in the left-hand side of (\ref{quadratic}) we have
$$
\Delta=4(t^2-AB)<4\left(\frac{2+\sqrt{2}}{3}-AB\right).
$$

Notice now that, if we let $t$ vary in the interval $\left(1,\sqrt{(2+\sqrt{2})/3}\right)$ and $W$ in the hypersurface $M_t^3$, expressions (\ref{solquadr1}) and (\ref{AB}) yield an explicit formula for the product $AB$ viewed as a (two-valued) function of $a$ where, by Lemma \ref{ellipse}, the parameter $a$ varies in the domain ${\mathcal E}$ defined in (\ref{domainE}). A straightforward albeit tedious calculation, which we omit because of its length, now implies that everywhere on ${\mathcal E}$ one has $AB\ge(2+\sqrt{2})/3$. It then follows that $\Delta<0$, which contradicts the existence of a real (in fact, positive) root of equation (\ref{quadratic}). 

The proof of Theorem \ref{main1} is complete.  

\section{A few remarks}\label{sect3}
\setcounter{equation}{0}

We conclude the paper with several remarks.

\begin{remark}\label{greaterinj}
By \cite[Proposition 2.1]{I}, any real-analytic CR-embedding of $M_t^n$ in $\CC^n$ extends to a biholomorphic mapping of the domain
$$
\{(z_1,\dots,z_{n+1})\in\CC^{n+1}: |z_1|^2+\dots+|z_{n+1}|^2<t\}\cap Q^n
$$
onto a domain in $\CC^n$, where $n\ge 2$, $t>1$. Therefore, Theorem \ref{main1} implies that for every value $t<\sqrt{(2+\sqrt{2})/3}$ and every point $W=\left(w_1,(1-w_3w_4)/w_1,w_3,w_4\right)$ in $M_t^3$ with $w_1\ne 0$, $w_3\ne 0$, the point $\hat W:=\left(w_1,(1-w_3\hat w_4)/w_1,w_3,\hat w_4\right)$ lies in $M_{t'}^3$ where $t'\ge\sqrt{(2+\sqrt{2})/3}$ for each choice of $\hat w_4$ in (\ref{solquadr}).
\end{remark}

\begin{remark}\label{remarkinj}
It is easy to see that the injectivity of $\tilde F$ on $M_t^3$ fails for all sufficiently large values of $t$. For example, if $t\ge\sqrt{2}$, let $u\ne 0$ be a real number satisfying
$$
2u^2+\frac{1}{u^2}=2t, 
$$
(cf.~Lemma \ref{min}) and consider the following three distinct points in $Q^3$:
\begin{equation}
W_u:=\left(u,\frac{1}{u},u,0\right), W_u':=\left(u,0,u,\frac{1}{u}\right), W_u'':=\left(u,\frac{1+i}{2u},u,\frac{1-i}{2u}\right).\label{threepoints}
\end{equation}
Then $W_u, W_u', W_u''\in M_t^3$ and $\tilde F(W_u)=\tilde F(W_u')=\tilde F(W_u'')=(u,u,0)$. Since by Proposition \ref{propfibers} every fiber of $\tilde F$ contains at most three points, $W_u,W_u', W_u''$ form the complete fiber of $\tilde F$ over $(u,u,0)$. 
\end{remark}

\begin{remark}\label{othermaps}
In \cite{AR} the authors in fact introduced not just the map $f$ (see (\ref{mapf})) but a class of maps of the form
$$
g:\CC^2\ra\CC^3,\quad g(z,w):=(z,w,P(z,\bar z,w,\bar w)).
$$
Here $P$ is a harmonic polynomial given by
$$
P=\left(\bar z\frac{\partial}{\partial w}-\bar w\frac{\partial}{\partial z}\right)\left(\sum_{j=1}^m\frac{1}{p_j(q_j+1)}Q_j\right),
$$
where $Q_j$ is a homogeneous harmonic complex-valued polynomial on $\CC^2$ of total degree $p_j\ge 1$ in $z,w$ and total degree $q_j$ in $\bar z$, $\bar w$, such that the sum $Q:=Q_1+\dots+Q_m$ does not vanish on $S^3$. It is possible that one can investigate the embeddability of $M_t^3$ in $\CC^3$ for $t\ge\sqrt{(2+\sqrt{2})/3}$ using these more general maps. Note, however, that while it is tempting to take $Q$ to be a polynomial in $|z|^2$, $|w|^2$ (as was done in \cite{AR}), one should avoid doing so as otherwise the resulting polynomial $P$ would be divisible by $\bar z\bar w$, which implies that the holomorphic extension $G$ of the push-forward of $g$ to $\RR^4$ is not injective on $M_t^3$ with $t\ge\sqrt{2}$ (cf.~Remark \ref{remarkinj}). Indeed, writing $G$ in the coordinates $w_j$ defined in (\ref{coordw}), for the points $W_u$, $W_u'$ introduced in (\ref{threepoints}) one has $G(W_u)=G(W_u')=(u,u,0)$. Thus, one cannot obtain the embeddability of $M_t^3$ in $\CC^3$ for $t\ge\sqrt{2}$ by utilizing any of the maps introduced in \cite{AR}, with $Q$ being a function of $|z|^2$, $|w|^2$ alone. On the other hand, for $Q$ of a more general form an analysis of the holomorphic extension $G$ of the kind we performed above for the map $F$ becomes computationally quite challenging. 
\end{remark}

\begin{remark}\label{rem2}\rm As explained in \cite[Remark 2.2]{I}, every hypersurface $M_t^n$ is nonspherical. Therefore, every manifold $M_t^3$ embeddable in $\CC^3$ provides an example of a compact strongly pseudoconvex simply-connected hypersurface in $\CC^3$ without umbilic points. Such hypersurfaces have been known before, but the arguments required to obtain nonumbilicity for them are rather involved. For instance, the proof in \cite{W} of the fact that every generic ellipsoid in $\CC^n$ for $n\ge 3$ has no umbilic points relies on the Chern-Moser theory. Note for comparison that an example of a compact strongly pseudoconvex hypersurface in $\CC^2$ having no umbilic points has been constructed only very recently (see \cite{ESZ}).
\end{remark}

\end{document}